\title{\textbf{A Dual Representation in Spectral Algebraic Geometry}}
\author{Renaud Gauthier \footnote{rg.mathematics@gmail.com} \\ \\}
\theoremstyle{definition}
\newtheorem{AffPerc}{Theorem}[section]
\newtheorem{Perc}[AffPerc]{Theorem}
\newtheorem{DualitySpDM}{Theorem}[section]
\newtheorem{DualityFctr}[DualitySpDM]{Theorem}
\newtheorem{Var1}{Theorem}[subsection]
\newtheorem{Var2}[Var1]{Theorem}
\DeclareMathOperator*{\colim}{\text{colim}}
\newcommand{\beq}{\begin{equation}}
\newcommand{\eeq}{\end{equation}}
\newcommand{\ind}{\rightsquigarrow}
\newcommand{\hrarr}{\hookrightarrow}
\newcommand{\rarr}{\rightarrow}
\newcommand{\Lrarr}{\longrightarrow}
\newcommand{\Ob}{\text{Ob}}
\newcommand{\surj}{\twoheadrightarrow}
\newcommand{\xrarr}{\xrightarrow}
\newcommand{\cA}{\mathcal{A}}
\newcommand{\cC}{\mathcal{C}}
\newcommand{\cD}{\mathcal{D}}
\newcommand{\cE}{\mathcal{E}}
\newcommand{\cF}{\mathcal{F}}
\newcommand{\cG}{\mathcal{G}}
\newcommand{\cH}{\mathcal{H}}
\newcommand{\cO}{\mathcal{O}}
\newcommand{\cS}{\mathcal{S}}
\newcommand{\cT}{\mathcal{T}}
\newcommand{\cX}{\mathcal{X}}
\newcommand{\cY}{\mathcal{Y}}
\newcommand{\cZ}{\mathcal{Z}}
\newcommand{\gm}{\mathfrak{m}}
\newcommand{\bE}{\mathbb{E}}
\newcommand{\bZ}{\mathbb{Z}}
\newcommand{\Cat}{\text{Cat}}
\newcommand{\Catinf}{\Cat_{\infty}}
\newcommand{\CatDinf}{\Cat^{\Delta}_{\infty}}
\newcommand{\Fun}{\text{Fun}}
\newcommand{\hlim}{\text{hlim}}
\newcommand{\Hom}{\text{Hom}}
\newcommand{\Ind}{\text{Ind}}
\newcommand{\Map}{\text{Map}}
\newcommand{\Mod}{\text{Mod}}
\newcommand{\op}{\text{op}}
\newcommand{\Set}{\text{Set}}
\newcommand{\SetD}{\Set_{\Delta}}
\newcommand{\Spec}{\text{Spec}}
\newcommand{\QCoh}{\text{QCoh}}
\newcommand{\Sh}{\text{Sh}}
\newcommand{\Aff}{\text{Aff}}
\newcommand{\AffX}{\Aff_{X}}
\newcommand{\AffXop}{\AffX^{\op}}
\newcommand{\Alg}{\text{Alg}}
\newcommand{\Aa}{\text{A}_{\alpha}}
\newcommand{\bfX}{\mathbf{X}}
\newcommand{\bX}{\textbf{X}}
\newcommand{\bfY}{\mathbf{Y}}
\newcommand{\CRing}{\text{CRing}}
\newcommand{\Ctensor}{\cC^{\otimes}}
\newcommand{\CAlg}{\text{CAlg}}
\newcommand{\CAlgdiscr}{\CAlg^{\text{discr}}}
\newcommand{\CAlgcn}{\CAlg^{\text{cn}}}
\newcommand{\CAlgaugk}{\CAlg^{\text{aug}}_k}
\newcommand{\CAlgaugkop}{(\CAlgaugk)^{\op}}
\newcommand{\CAlgartk}{\CAlg^{\text{art}}_k}
\newcommand{\CAlgR}{\CAlg_R}
\newcommand{\CAlgetR}{\CAlgR^{\text{\'et}}}
\newcommand{\CAlgk}{\CAlg_k}
\newcommand{\cOX}{\cO_{\cX}}
\newcommand{\cOXUa}{\cOX|_{\Ua}}
\newcommand{\cOY}{\cO_{\cY}}
\newcommand{\cOXU}{\cOX|_U}
\newcommand{\cOSpfR}{\cO_{\Spf R}}
\newcommand{\cOSpetR}{\cO_{\Spet R}}
\newcommand{\cXUa}{\cX_{/\Ua}}
\newcommand{\cXU}{\cX_{/U}}
\newcommand{\Eop}{\cE^{\op}}
\newcommand{\Ek}{\mathbb{E}_k}
\newcommand{\Einf}{\mathbb{E}_{\infty}}
\newcommand{\FunEhatS}{\Fun(\cE, \hat{\cS})}
\newcommand{\FunCAlgcnhatS}{\Fun(\CAlgcn, \hat{\cS})}
\newcommand{\FunCAlgartkS}{\Fun(\CAlgartk, \cS)}
\newcommand{\FunCAlgcnS}{\Fun(\CAlgcn, \cS)}
\newcommand{\FunECD}{\Fun_{\cE}(\cC, \cD)}
\newcommand{\FunEopS}{\Fun(\cE^{\op}, \cS)}
\newcommand{\FuncartECD}{\Fun^{\text{cart}}_{\cE}(\cC, \cD)}
\newcommand{\FuncocartCAlgartk}{\Fun^{\text{cocart}}_{\CAlgartk}}
\newcommand{\FuncocartCAlgcn}{\Fun^{\text{cocart}}_{\CAlgcn}}
\newcommand{\Fins}{\cF\text{in}_*}
\newcommand{\ggs}{\mathfrak{g}_*}
\newcommand{\hCatinf}{\widehat{\Catinf}}
\newcommand{\is}{\iota^*}
\newcommand{\isX}{\is \bfX}
\newcommand{\infTopsHenMod}{\infty \cT \text{op}_{\Mod}^{\text{sHen}}}
\newcommand{\infTopsHenCAlg}{\infty \cT \text{op}_{\CAlg}^{\text{sHen}}}
\newcommand{\infTopMod}{\infty\cT\text{op}_{\Mod}}
\newcommand{\Kan}{\text{Kan}}
\newcommand{\LX}{\text{L}_{\bfX}}
\newcommand{\LXY}{\text{L}_{\bfX/\bfY}}
\newcommand{\Liek}{\text{Lie}_k}
\newcommand{\LMod}{\text{LMod}}
\newcommand{\LModR}{\LMod_R}
\newcommand{\ModX}{\Mod^{\bfX}}
\newcommand{\Modulik}{\text{Moduli}_k}
\newcommand{\Modk}{\Mod_k}
\newcommand{\ModA}{\Mod_A}
\newcommand{\ModXart}{\ModX_{\text{art}}}
\newcommand{\ModXcn}{\ModX_{\text{cn}}}
\newcommand{\ModisX}{\Mod^{\isX}}
\newcommand{\ModisXart}{\ModisX_{\text{art}}}
\newcommand{\oT}{\otimes}
\newcommand{\bQCoh}{\textbf{QCoh}}
\newcommand{\bQCohart}{\bQCoh_{\text{art}}}
\newcommand{\Rep}{\text{Rep}}
\newcommand{\Repgs}{\text{Rep}_{\ggs}}
\newcommand{\ShadR}{\Sh^{\text{ad}}_R}
\newcommand{\SetDRE}{(\SetD^{\text{Rfib}})_{/\cE}}
\newcommand{\ShetR}{\text{Sh}^{\text{\'et}}_R}
\newcommand{\Spet}{\text{Sp\'et}}
\newcommand{\SpDM}{\text{SpDM}}
\newcommand{\Sp}{\text{Sp}}
\newcommand{\Spf}{\text{Spf}}
\newcommand{\Sfins}{\cS^{fin}_*}
\newcommand{\tN}{\text{N}}
\newcommand{\tcn}{\text{cn}}
\newcommand{\tauleqn}{\tau_{\leq n}}
\newcommand{\Ua}{U_{\alpha}}
\begin{document}
\maketitle
\begin{abstract}
	Given a spectral Deligne-Mumford stack $X$, we define a perception of $X$ to be a collection of a certain class of morphisms $Y \rarr X$. For the class of affine morphisms in $\SpDM_{/X}$, we show that from $\QCoh(X)$ one can extract the affine perception $\AffX$ of $X$ on the one hand, and a subcategory of an $\infty$-category of representations $\Repgs$ of a dg Lie algebra $\ggs$ associated with $X$ on the other. For the class of local morphisms $\Spet R \rarr X$, the local perception of $X$ is given by the functor $\bfX = \Hom(\Spet(-), X)$ it represents. If $\bfX$ is a geometric stack, Tannaka duality allows us to recover $\bX$ from $\bQCoh(\bfX)$, from which we can also get, after base change, a subcategory of $\Repgs$. We generalize those results by considering functors $\bfX: \CAlgcn \rarr \cS$ that are representable in accordance with the spectral Artin representability theorem of Lurie. 
\end{abstract}

\newpage

\section{Introduction}
In this very short paper, we show that for a class of geometric objects $X$, we have a connection, in a sense to be precised below, between certain collections of maps into $X$, which provides what we call the \textbf{perception} of $X$, and the representation theory of those objects $X$. More generally, if $\bX$ satisfies the hypotheses of the spectral Artin representability theorem (Theorem 16.0.1 of \cite{SAG}), it is representable by a spectral Deligne-Mumford stack $X$: $\bX = \Hom(\Spet(-), X)=h_X$. Further by Proposition 6.2.4.1 of \cite{SAG}, we have an equivalence of $\infty$-categories $\QCoh(h_X) \simeq \QCoh(X)$. Finally by Proposition 2.5.1.2 of \cite{SAG}, for $\AffX$ the full subcategory of the slice category $\SpDM_{/X}$ of spectral Deligne Mumford stacks over $X$ spanned by affine maps, we have an equivalence of $\infty$-categories $\AffXop \simeq \CAlg(\QCoh(X)^{\tcn}) $. Collecting things together, we arrive at $\AffXop \simeq \CAlg(\bQCoh(\bfX)^{\tcn})$ (we use boldface notations for concepts pertaining to functors). We refer to $\AffX$ as the \textbf{affine perception} of $X$. 
On the other hand, Thm 13.4.0.1 of \cite{SAG} states that for a field $k$ of characteristic zero, $\bX: \CAlgartk \rarr \cS$ a formal moduli problem over $k$, $\Psi^{-1}(\bX) = \ggs$ its associated dg Lie algebra over $k$, we have a fully faithful monoidal embedding $\bQCohart(\bX) \hrarr \Repgs$. Here $\Psi: \Liek \rarr \Modulik$ is an equivalence of $\infty$-categories provided by Thm 13.0.0.2 of the same reference, and $\bQCohart$ is the quasi-coherent sheaf functor on formal moduli problems. In this context, we refer to the $\infty$-category of quasi-coherent sheaves on $\bX$ as the \textbf{manifestation} of $\bX$, not to be confused with the manifestation as introduced in \cite{SAG}, and $\Repgs$ as the \textbf{representation} mentioned above. The connection between perceptions and representations is provided by the obvious functor $\iota: \CAlgartk \rarr \CAlgcn$. For the pullback of the artinian object $\bX: \CAlgcn \rarr \cS$ along $\iota$, we have a fully faithful embedding as pointed out above:
\beq
\bQCohart(\isX) \hrarr \Rep_{\Psi^{-1} (\isX)} \nonumber
\eeq
where functors $\bfX: \CAlgcn \rarr \cS$ such that $\iota^* \bfX \in \Modulik$ are said to be artinian. The connection between quasi-coherent sheaves on a functor $\bX$ and $\isX$ is simply given by base change:
\beq
\bQCohart(\isX) = \CAlgartk \times_{\CAlgcn} \bQCoh( \bfX) \nonumber
\eeq

Collecting things together, we have our first result:

\newpage

\begin{AffPerc}
	If a functor $\bX: \CAlgcn \rarr \cS$ satisfies the hypotheses of the spectral Artin representability theorem, and $\bfX = \Hom(\Spet(-), X)$ for some spectral Deligne-Mumford stack $X$, if further $\bfX$ is artinian, then from the manifestation $\bQCoh(\bfX)$ of $\bfX$ one can extract a perception as $\CAlg(\bQCoh(\bfX)^{\tcn}) \simeq \AffXop$, and one also has a representation theoretic presentation of $\bQCoh(\bfX)$ as a full subcategory of $\Rep_{\Psi^{-1}(\isX)}$ after base change. 
\end{AffPerc}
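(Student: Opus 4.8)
The plan is to assemble the statement from the results recalled in the introduction, handling its two assertions separately. First I would invoke the spectral Artin representability theorem (Theorem 16.0.1 of \cite{SAG}): since $\bX$ satisfies its hypotheses, $\bX$ is representable by a spectral Deligne--Mumford stack $X$, and by assumption $\bfX = h_X = \Hom(\Spet(-), X)$. Proposition 6.2.4.1 of \cite{SAG} then supplies an equivalence of symmetric monoidal $\infty$-categories $\bQCoh(\bfX) = \QCoh(h_X) \simeq \QCoh(X)$; restricting to connective objects and applying $\CAlg(-)$ produces $\CAlg(\bQCoh(\bfX)^{\tcn}) \simeq \CAlg(\QCoh(X)^{\tcn})$. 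Finally Proposition 2.5.1.2 of \cite{SAG} identifies the latter with $\AffXop$, which yields the first claim: the affine perception is extracted from the manifestation as $\CAlg(\bQCoh(\bfX)^{\tcn}) \simeq \AffXop$.

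For the second assertion I would pass to $\isX$, the functor on $\CAlgartk$ obtained from $\bX$ along the obvious functor $\iota: \CAlgartk \rarr \CAlgcn$. By hypothesis $\isX$ is a formal moduli problem over the characteristic-zero field $k$, so the equivalence $\Psi: \Liek \rarr \Modulik$ of Theorem 13.0.0.2 of \cite{SAG} furnishes its associated dg Lie algebra $\Psi^{-1}(\isX)$, and Theorem 13.4.0.1 of the same reference gives a fully faithful monoidal embedding
\beq
\bQCohart(\isX) \hrarr \Rep_{\Psi^{-1}(\isX)}. \nonumber
\eeq
It then remains only to tie $\bQCohart(\isX)$ back to the manifestation $\bQCoh(\bfX)$, which is precisely the base-change identification recorded in the introduction,
\beq
\bQCohart(\isX) = \CAlgartk \times_{\CAlgcn} \bQCoh(\bfX). \nonumber
\eeq
Concatenating these two displays exhibits the artinian part of $\bQCoh(\bfX)$, i.e.\ $\bQCoh(\bfX)$ after base change, as a full subcategory of $\Rep_{\Psi^{-1}(\isX)}$, which is the second claim.

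The main obstacle I anticipate is not any isolated computation but the bookkeeping of compatibilities along the chain of cited equivalences. Concretely, I would need to check that the equivalence $\QCoh(h_X) \simeq \QCoh(X)$ of Proposition 6.2.4.1 is symmetric monoidal and restricts correctly to connective objects, so that passing to $\CAlg(-)$ and matching with $\AffXop$ via Proposition 2.5.1.2 is genuinely an identification of the relevant structures rather than merely of underlying $\infty$-categories. For the second assertion the delicate point is verifying that the formal moduli problem hypothesis imposed on $\isX$ is exactly what licenses the application of Theorem 13.4.0.1, and that the square defining $\bQCohart(\isX)$ is a pullback of $\infty$-categories compatible with the monoidal embedding, so that the full subcategory inclusion survives the base change. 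Once these compatibilities are secured, both assertions follow by juxtaposition of the quoted results.
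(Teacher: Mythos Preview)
Your proposal is correct and follows essentially the same route as the paper: both assemble the statement by concatenating Proposition 6.2.4.1 and Proposition 2.5.1.2 of \cite{SAG} for the affine-perception side, and Theorem 13.4.0.1 together with the base-change identification $\bQCohart(\isX) = \CAlgartk \times_{\CAlgcn} \bQCoh(\bfX)$ for the representation-theoretic side. The compatibility checks you flag (symmetric monoidal structure, preservation of connectivity) are treated in the paper at the same level of informality, so your argument matches the paper's in both content and depth.
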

For a stronger result, we use Theorem 9.3.0.3 of \cite{SAG}, Tannaka duality for geometric stacks: if $\bX$ is a geometric stack, then $\bX$ can be functorially recovered from $\bQCoh(\bX)$. In this situation, if $\bfX$ is representable, $\bfX = \Hom(\Spet(-), X)$, $\bfX$ would correspond to a functorial local perception of $X$. It follows that we have the following result:
\begin{Perc}
	If $\bX$ is an artinian geometric stack, and satisfies the hypotheses of the spectral Artin representability theorem, then from $\bQCoh(\bfX)$ one can recover the local perception $\bfX$ of the spectral Deligne-Mumford stack it represents, and one can obtain a full subcategory of $\Rep_{\Psi^{-1}(\isX)}$ after base change.
\end{Perc}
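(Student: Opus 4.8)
The plan is to assemble the statement from the three structural inputs already isolated in the introduction: spectral Artin representability, Tannaka duality for geometric stacks, and Lurie's Lie-theoretic embedding for formal moduli problems. First I would record that, since $\bX$ satisfies the hypotheses of the spectral Artin representability theorem (Theorem 16.0.1 of \cite{SAG}), it is representable, so that $\bfX = \Hom(\Spet(-), X) = h_X$ for a spectral Deligne-Mumford stack $X$. By construction this functor $h_X$ is precisely the local perception of $X$: it records exactly the local morphisms $\Spet R \rarr X$ organized into the functor they represent. Hence the target of the first assertion, to ``recover the local perception,'' is literally the target to recover $\bfX$ itself.

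For the recovery I would invoke Tannaka duality in the form of Theorem 9.3.0.3 of \cite{SAG}. Because $\bX$ is by hypothesis a geometric stack, that theorem produces a functorial reconstruction of $\bfX$ from the symmetric monoidal $\infty$-category $\bQCoh(\bfX)$. Combining this with the identification $\bQCoh(\bfX) = \QCoh(h_X) \simeq \QCoh(X)$ furnished by Proposition 6.2.4.1 of \cite{SAG}, the reconstruction carries $\bQCoh(\bfX)$ back to $\bfX = h_X$, i.e. to the local perception. The point that requires care here is that the two descriptions of $\bX$ must be reconciled: the geometric-stack structure feeding Tannaka duality and the representable structure $h_X$ produced by Artin representability have to be structures on one and the same object, so that the Tannakian output is identified with the local perception rather than with a merely abstract functor. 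This compatibility is what strengthens the conclusion of the preceding theorem from an extraction to a genuine recovery.

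For the second claim I would proceed exactly as in the affine case, via base change. The identity $\bQCohart(\isX) = \CAlgartk \times_{\CAlgcn} \bQCoh(\bfX)$ exhibits $\bQCohart(\isX)$ as obtained from the manifestation $\bQCoh(\bfX)$ by pulling back along $\iota: \CAlgartk \rarr \CAlgcn$. Since $\isX$ is assumed to be a formal moduli problem, Theorem 13.4.0.1 of \cite{SAG} supplies a fully faithful monoidal embedding $\bQCohart(\isX) \hrarr \Rep_{\Psi^{-1}(\isX)}$, where $\Psi^{-1}(\isX)$ is the dg Lie algebra associated to $\isX$ through the equivalence $\Psi$ of Theorem 13.0.0.2. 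A fully faithful embedding identifies its source with a full subcategory of its target, so this realizes the object obtained from $\bQCoh(\bfX)$ after base change as a full subcategory of $\Rep_{\Psi^{-1}(\isX)}$, which is the second assertion.

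The main obstacle I anticipate is not any single computation but the bookkeeping of hypotheses across the three cited theorems, and in particular the characteristic-zero assumption silently inherited from Theorem 13.4.0.1: the field $k$ over which $\isX$ is a formal moduli problem must have characteristic zero for the Lie-theoretic embedding to exist, and one should verify that this is consistent with the running hypotheses on $\bX$ used for Artin representability and Tannaka duality. Secondarily, one must check that the functoriality in Tannaka duality is robust enough that reconstruction applied to $\bQCoh(\bfX)$ returns the local perception as a functor on $\CAlgcn$, and not merely up to some coarser identification; this is exactly where the equality $\bfX = h_X$ does the essential work, tying the Tannakian reconstruction to the representable functor that the spectral Artin representability theorem provides.
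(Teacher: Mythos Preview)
Your proposal is correct and follows essentially the same approach as the paper: invoke Tannaka duality for geometric stacks (Theorem 9.3.0.3 of \cite{SAG}) to recover $\bfX$ from $\bQCoh(\bfX)$, then use the base change identification $\bQCohart(\isX) = \CAlgartk \times_{\CAlgcn} \bQCoh(\bfX)$ together with Theorem 13.4.0.1 of \cite{SAG} to obtain the fully faithful embedding into $\Rep_{\Psi^{-1}(\isX)}$. Your additional remarks on the characteristic-zero hypothesis and on reconciling the representable and geometric-stack structures are reasonable bookkeeping but do not alter the argument, which matches the paper's proof of the corresponding statement in Section~7.3.
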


In both instances, the manifestation $\bQCoh(\bfX)$ of $\bfX$ is pivotal in obtaining the perception of $X$ and a subcategory $\bQCohart(\isX)$ with a representation theoretic flavor, a dual representation of sort. This provides a bridge between how a geometric object $X$ is perceived, and its actions through its representation theory.\\

Most of this work is essentially based on three references; Higher Topos Theory, Higher Algebra, and Spectral Algebraic Geometry, all by the same author, J. Lurie. Rather than referring the reader to those voluminous references for various results, we thought it necessary to make the present work reasonably self-contained. By this we mean that we will re-introduce only that material which is immediately connected to what we are discussing. Background topics such as $\infty$-categories, $\infty$-topos, $\infty$-operads, etc... will not be covered, and will be assumed to be well understood. We have chosen to follow a strictly utilitarian, though unorthodox, way of presenting the requisite material, that is in a top down manner, by presenting the most advanced concepts first, and those they depend upon afterwards, rather than following the conventional bottom up approach, which would leave most readers wondering why are certain concepts introduced. This will also allow readers to pick what they need in the initial review. The reason for doing this also is that we claim no originality about the background material. Various pointed references are given as footnotes; putting references in the body of the text would make it unreadable.\\

Special notations: $\ind$ will mean induces, $\surj$ is used for surjective maps, CD stands for commutative diagram, RLP means right lifting property, L is used for left, R for right.\\

References are standard. For background material on Algebraic Geometry one can use \cite{AG}. For Topos Theory we use \cite{SHT} and \cite{HTT}. For $\infty$-categories, \cite{SHT} for foundations, and \cite{HTT}. For $\infty$-operads, symmetric monoidal $\infty$-categories and $\Einf$-ring spectra, \cite{HA}. Spectral Algebraic Geometry is developed in \cite{SAG}.\\

The first six sections are review material; we very briefly cover $\Einf$-ring spectra, spectral Deligne-Mumford stacks, the spectral Artin representability theorem and the representation theory of formal moduli problems. The reader who feels comfortable with these topics can safely skip those sections and jump to section $7$ in which we start new material. Note that whenever we discuss formal moduli problems, a field $k$ of characteristic zero is assumed to having been fixed.\\

\section{Spectra}
Central to Spectral Algebraic Geometry are $\Einf$-rings. Those are objects of the category $\Sp$ of spectra. We first define spectra, and then all accompanying notions.\\

A \textbf{spectrum} \footnote{Def. 1.4.2.8 \cite{SAG}} is a \textbf{reduced} \footnote{Def.1.4.2.1 \cite{SAG}} (maps pushouts to pullbacks) and \textbf{excisive} \footnote{Def. 1.4.2.1 \cite{SAG}} (preserves final objects) functor $X: 
\Sfins \rarr \cS$. We denote by $\Sp$ the $\infty$-category of spectra.\\

In this definition, $\Sfins$ is the smallest full subcategory of $\cS_*$, the $\infty$-category of pointed objects of $\cS$, which is stable under finite colimits. Here $\cS$ refers to the $\infty$-category of spaces, which is defined as the simplicial nerve of the full subcategory $\Kan$ of $\SetD$ spanned by Kan complexes: $\cS = \text{N}(\Kan)$ \footnote{Def. 1.2.16.1 \cite{HTT}}.\\

Another (rather indirect way) of defining spectra goes as follows, as covered in \cite{SAG}. If $\cH$ denotes the category of pointed CW complexes with homotopy classes of pointed maps between them as morphisms, we have a suspension functor $\Sigma: \cH \rarr \cH$. Observing that $\cH = h \Sfins$, the suspension functor lifts in $\Sfins$ to a functor that we denote by the same letter, giving rise to a sequence $\cdots \rarr \Sfins \xrarr{\Sigma} \Sfins \cdots$, whose limit is denoted $\Sp^{fin}$, and the $\infty$-category of spectra is defined as $\Sp = \Ind(\Sp^{fin})$, the $\infty$-category of Ind objects of $\Sp^{fin}$ obtained by formally adding filtered colimits. \footnote{ Constr. 0.2.3.10 \cite{SAG}}

\section{$\Einf$-ring spectra}
$\Einf$-rings are objects of the $\infty$-category $\CAlg = \CAlg(\Sp)$ of commutative algebra objects of $\Sp$, where $\Sp$ is regarded as a symmetric monoidal $\infty$-category for the smash product monoidal structure.\\

For $k \geq 0$, we define a \textbf{$\Ek$-ring} \footnote{Def.7.1.0.1. \cite{HA}}, to be an object of the $\infty$-category $\Alg_{\Ek}(\Sp)$. Independently, objects of the $\infty$-category $\Alg_{\tN(\Fins)}(\Sp) = \CAlg(\Sp) = \CAlg$ are referred to as commutative algebra objects of $\Sp$, and this $\infty$-category $\CAlg$ can be identified with the homotopy limit $\hlim \Alg_{\Ek}(\Sp)$. Objects thereof are called \textbf{$\Einf$-rings}. For $k \geq 0$, let $R$ be a $\bE_{k+1}$-ring. Let $\Alg^{(k)}_R = \Alg_{\Ek}(\LModR)$ be the \textbf{$\infty$-category of $\Ek$-algebras over $R$} \footnote{Def. 7.1.3.5 \cite{HA}}, where $\LModR = \LMod_R(\Sp)$ is regarded as a $\bE_{k+1}$-monoidal $\infty$-category. We also denote by $\CAlgR = \CAlg(\LModR) = \Alg_{\Einf}(\LModR)$ the \textbf{$\infty$-category of $\Einf$-algebras over $R$} \footnote{Variant 7.1.3.8 \cite{HA}}. \\

We now go over a few properties of $\Einf$-rings. For $n \in \bZ$ define the $n$-sphere to be $S^n = (S^0, n)$ \footnote{Def. 0.2.3.2. \cite{SAG}}, where $S^0$ is the 0-sphere. For $E \in \Sp$, $n\in \bZ$ define the $n$-space of $E$ to be $\Omega^{\infty-n}E = \Map_{\Sp}(S^{-n}, E)$. For $X$ a space, the $n$-th cohomology group of $X$ with coefficients in $E$ is defined by $E^n(X) = \pi_0 \Map_{\cS}(X, \Omega^{\infty-n}E)$. Finally for $E \in \CAlg$, define $\pi_nE = E^{-n}(\{x\})$. Armed with this definition, we say $E$ is \textbf{connective} if $\pi_nE= 0 $ if $n<0$. We let $\CAlgcn$ be the full subcategory of $\CAlg$ spanned by the connective $\Einf$-rings. An $\Einf$-ring $E$ is said to be \textbf{discrete} if $\pi_nE = 0$ if $n \neq 0$. We let $\CAlgdiscr$ be the full subcategory of $\CAlg$ spanned by those objects. The construction $A \mapsto \pi_0A$ gives an equivalence of $\infty$-categories between $\CAlgdiscr$ and $\CRing$ (where we take the usual stance as in \cite{HTT} of viewing ordinary categories as $\infty$-categories by taking their nerve). In this manner we can identify a commutative ring with an $\Einf$-ring via this equivalence. Thus we can define $\CAlgk$, and the $\infty$-category of augmented $\Einf$-algebras $(\CAlgk)_{/k} = \CAlgaugk$. $E \in \CAlgaugk$ is said to be \textbf{artinian} if it is connective, $\pi_*E$ is a finite dimensional vector space over $k$, and $\pi_0E$ is a local ring. We denote by $\CAlgartk$ the $\infty$-category of artinian algebras.\\

\section{Spectral Deligne-Mumford stacks}
The fundamental geometric object in this work is that of a  \textbf{Spectral Deligne-Mumford stack} \footnote{Def. 1.4.4.2. \cite{SAG}}, which we will just refer to as spectral DM stack. By definition, a spectrally ringed $\infty$-topos $X = (\cX, \cOX)$ is a spectral DM stack if it has a collection of objects $\Ua \in \cX$ covering $\cX$, such that $\forall \, \alpha$, $\exists \, \Aa \in \CAlg$ along with an equivalence of spectrally ringed $\infty$-topos $(\cXUa, \cOXUa) \simeq \Spet \Aa$, and the structure sheaf $\cOX$ is connective. We now define all the requisite notions.\\

A \textbf{spectrally ringed $\infty$-topos} \footnote{Def. 1.4.1.1 \cite{SAG}} is a pair $(\cX, \cO)$ consisting of an $\infty$-topos $\cX$ and a sheaf $\cO$ of $\Einf$-rings on it.\\

\newpage

For $\cC$ an $\infty$-category, a \textbf{$\cC$-valued sheaf} \footnote{Def. 1.3.1.4} on an $\infty$-topos $\cX$ is a functor $\cX^{\op} \rarr \cC$ that preserves small limits. \\

If $\cOX$ is a $\cC$-valued sheaf on an $\infty$-topos $\cX$, where $\cC$ is an $\infty$-category, $\forall \; U \in \Ob(\cX)$, we denote the composite $(\cXU)^{\op} \rarr \cX^{\op} \xrarr{\cOX} \cC$ by $\cOXU$. Observe that it is also a $\cC$-valued sheaf on $\cXU$ \footnote{Not. 1.4.4.1 \cite{SAG}}.\\

For $R$ an $\Einf$-ring, define the \textbf{\'etale spectrum of $R$} \footnote{Def. 1.4.2.5. \cite{SAG}} by $\Spet R = (\ShetR, \cO)$, where $\ShetR$ is the $\infty$-category $\Sh(\CAlgetR)$ of sheaves of spaces on $\CAlgetR$ for the \'etale topology, and $\cO: \CAlgetR \rarr \CAlg$ is the forgetful functor. Thus defined, $\Spet R$ is a spectrally ringed $\infty$-topos. Observe that $\cO$ is a strictly Henselian sheaf of $\Einf$-rings on the $\infty$-topos $\ShetR$, but it is also a sheaf on $\CAlgetR$ with respect to the \'etale topology. Recall from \cite{SAG} that we have two ways of defining sheaves. For a fixed $\infty$-category $\cC$, $\cX$ an $\infty$-topos, we have $\cC$-valued sheaves on $\cX$ as defined above, and they are objects of the $\infty$-category $\Sh_{\cC}(\cX)$. But if $\cA$ is an essentially small $\infty$-category with a Grothendieck topology, a $\cC$-valued sheaf on $\cA$ \footnote{Def. 1.3.1.1. \cite{SAG}} is a functor $\cF: \cA^{\op} \rarr \cC$ such that $\forall \; U \in \Ob(\cA)$, for any covering sieve $C$ of $U$, we have an equivalence in $\cC$: $\cF(U) \rarr \lim_{V \in C}\cF(V)$. We let $\Sh_{\cC}(\cA)$ be the full subcategory of $\Fun(\cA^{\op}, \cC)$ spanned by the $\cC$-valued sheaves. The connection between these two concepts is provided by Proposition 1.3.1.7 of \cite{SAG}, which states that we have an equivalence of $\infty$-categories: $\Sh_{\cC}(\Sh(\cA)) \simeq \Sh_{\cC}(\cA)$. Letting $\cC = \CAlg$, $\cA = \CAlgetR$, this reads $\Sh_{\CAlg}(\ShetR) \simeq \Sh_{\CAlg}(\CAlgetR)$, from which we see that $\cO$ above can be regarded in two ways.\\

\newpage

Recall from \cite{SAG} that a morphism $\phi:R \rarr S$ in $\CAlg$ is \'etale if $\pi_0R \rarr \pi_0 S$ is \'etale and in addition the morphism $\phi$ induces an isomorphism $\pi_0 S \otimes_{\pi_0 R} \pi_*R \rarr \pi_* S$. We denote by $\CAlgetR$ the full subcategory of $\CAlgR$ consisting of \textbf{\'etale $R$-algebras}. As mentioned above we have a Grothendieck topology on $(\CAlgetR)^{\op}$ defined as follows: \footnote{Prop. B.6.2.1. \cite{SAG}} given $A \in \CAlgR$, a sieve $C$ over $A$ is a covering sieve, iff $C \supseteq \{A \rarr A_i\}_{1 \leq i \leq n}$, with a faithfully flat induced map $A \rarr \prod_i A_i$. We call this the \textbf{\'etale topology} on $(\CAlgetR)^{\op}$ \footnote{Def. B.6.2.2 \cite{SAG}}.\\

\section{Spectral Artin Representability}
One key result for our purposes from \cite{SAG}, is the Spectral Artin Representabiliy Theorem, which states that if $R$ is a Noetherian $\Einf$-ring such that $\pi_0 R$ is a Grothendieck ring, given a natural transformation $p: \mathbf{X} \rarr \Spec R$ in $\Fun(\CAlgcn, \cS)$, given $n \geq 0$, if $\bfX$ is such that $\bfX(A)$ is $n$-truncated for all $A \in \CAlgdiscr$, if $\bfX$ is a sheaf for the \'etale topology, if it is nilcomplete, integrable, infinitesimally cohesive, admits a connective cotangent complex, and $p$ is locally almost of finite presentation, then $\bfX$ is representable by a spectral DM $n$-stack $X$, locally almost of finite presentation over $R$.\\

In this section we will review all the concepts introduced in that theorem.\\

For $R \in \CAlgcn$, we define $\Spec R = \Hom_{\CAlg}(R,-): \CAlgcn \rarr \cS$ \footnote{Not. 6.2.2.3 \cite{SAG}}. If $\infTopsHenCAlg$ denotes the $\infty$-category of spectrally ringed $\infty$-topos with strictly Henselian structure sheaves, then we also have:
\beq
\Spec R = \Hom_{\CAlg}(R,-) = \Hom_{\infTopsHenCAlg}(\Spet(-), \Spet R) \nonumber
\eeq
We can also mention in passing \footnote{Rmrk. 6.2.2.4 \cite{SAG}} that if $\SpDM$ denotes the $\infty$-category of spectral DM stacks, we have a fully faithful embedding $h: \SpDM \rarr \Fun(\CAlgcn, \cS)$, mapping $\Spet R $ to $\Spec R$. That we have such an embedding will allow us later to identify $X \in \SpDM$ with the functor $\bfX = \Hom(\Spet(-), X)$ it represents. For instance, we will regard the manifestation $\bQCoh(\bfX)$ as the manifestation of $X$ through its functor of points. \\

We say that $X \in \SpDM$ is \textbf{locally almost of finite presentation over $R$} \footnote{Def. 4.2.0.1 \cite{SAG}} if for any CD in $\SpDM$ of the form (it being implied $A$ and $B$ are $\Einf$-rings):
\beq
\xymatrix{
	\Spet B \ar[d] \ar[r] & X \ar[d] \\
	\Spet A \ar[r] & \Spet R
}  \nonumber
\eeq
with \'etale horizontal maps, $B$ is \textbf{almost of finite presentation} over $A$ \footnote{Def. 7.2.4.26 \cite{HA}}, that is $B$ is an almost compact object of $\CAlg_A$, meaning $\tau_{\leq n}B$ is a compact object of $\tau_{\leq n} \CAlg_A$ for all $n \geq 0$.\\

Here we are using the notion of \textbf{\'etale map in $\SpDM$} \footnote{Def. 1.4.0.1 \cite{SAG}}. A morphism of spectral DM stacks is in particular a morphism of spectrally ringed $\infty$-topos. A morphism $f:(\cX, \cOX) \rarr (\cY, \cOY)$ between spectrally ringed $\infty$-topos\footnote{Constr. 1.4.1.3 \cite{SAG}} is given by a pair $(f_*, \psi)$, $f_*: \cX \rarr \cY$ a geometric morphism, $\psi: f^*\cOY \rarr \cOX$ the induced map on sheaves in $\Sh_{\CAlg}(\cX)$. We say $f$ is \'etale if $f_* \ind (\cX \simeq \cY_{/U})$, for some $U \in \Ob(\cY)$, and $\psi$ is an equivalence.\\

A functor $\bfX: \CAlgcn \rarr \cS$ is said to be \textbf{nilcomplete} \footnote{Def. 17.3.2.1 \cite{SAG}} if $\forall \, R \in \CAlgcn$, we have a homotopy equivalence $\bfX(R) \xrarr{\simeq} \lim\bfX(\tauleqn R)$.\\

Let $A_1$, $A_2$ and $A_0$ be objects of $\CAlgcn$ such that the maps $\pi_0A_1 \surj \pi_0A_0$ and $\pi_0 A_2 \surj \pi_0 A_0$ have nilpotent kernels in $\pi_0 A_1$ and $\pi_0 A_2$ respectively. Then a functor $\bfX: \CAlgcn \rarr \cS$ is said to be \textbf{infinitesimally cohesive} \footnote{Def. 17.3.1.5 \cite{SAG}} if it maps any pullback CD in $\CAlgcn$ of the form:
\beq
\xymatrix{
	A_{12} \ar[d] \ar[r] &A_2 \ar[d]\\
	A_1 \ar[r] &A_0
} \nonumber
\eeq
to a pullback square in $\cS$:
\beq
\xymatrix{
	\bfX(A_{12}) \ar[d] \ar[r] & \bfX(A_2) \ar[d]\\
	\bfX(A_1) \ar[r] & \bfX(A_0)
} \nonumber
\eeq

Let $R$ be a local Noetherian $\Einf$-ring. Suppose $R$ is complete with respect to its maximum ideal $\gm \subseteq \pi_0 R$. Let $\bfX: \CAlgcn \rarr \cS$ be a functor. If:
\beq
(\Spf R \hrarr \Spec R) \ind  \big( \Map_{\cC}(\Spec R, \bfX)  \xrarr{\simeq} \Map_{\cC}(\Spf R, \bfX) \big) \nonumber
\eeq
where $\cC = \Fun(\CAlgcn, \cS)$, where on the right hand side we have a homotopy equivalence, then we say $\bfX$ is \textbf{integrable} \footnote{Def. 17.3.4.1 \cite{SAG}}. In this definition, $\Spf R$ is the \textbf{formal spectrum} \footnote{Constr. 8.1.1.10 \cite{SAG}}, which is constructed as follows: for $R$ an adic $\Einf$-ring, $I \subseteq \pi_0 R$ a finitely generated ideal of definition, $R \mapsto R^{\wedge}_I$ the $I$-completion functor, denote by $\cOSpfR$ the following composition:
\beq
\CAlgetR \xrarr{\cOSpetR} \CAlgR \xrarr{\wedge} \CAlgR \nonumber
\eeq

Then we define $\Spf R = (\ShadR, \cOSpfR)$. Here $\ShadR$ is the subtopos of $\ShetR$ corresponding to the vanishing locus $X \subseteq | \Spec R|$ of $I$ \footnote{Not. 8.1.1.8 \cite{SAG}}. Going back to the definition of integrability, the inclusion $\Spf R \hrarr \Spec R$ should be understood as $\Hom(\Spet(-), \Spf R) \hrarr \Hom(\Spet(-), \Spet R) = \Spec R$. \\

We now tackle $\bfX$ having a cotangent complex. The full definition is intricate, and rather than being repetitive, we refer the reader to \cite{SAG} for a full coverage. We will limit ourselves to providing the great lines only, since we do not need a working definition, rather we just want to briefly expose the concept of tangent complex to put things in perspective. Let $\bfX: \CAlgcn \rarr \cS$ be a functor, classifiying a L fibration $\overline{\CAlgcn} \rarr \CAlgcn$. Let $\Mod=\Mod(\Sp)$ be the $\infty$-category of pairs $(A,M)$, for $A \in \CAlg$, $M \in \ModA$. Denote $\overline{\CAlgcn} \times_{\CAlg} \Mod$ by $\ModX$, with objects triples $(A,M,\eta)$, with $A$ connective and $\eta \in \bfX(A)$. The full subcategory thereof for which the  objects are such that $M$ is connective is denoted $\ModXcn$\footnote{Not. 17.2.4.1. \cite{SAG}}. Now for two functors $\bfX, \bfY: \CAlgcn \rarr \cS$, $\alpha: \bfX \Lrarr \bfY$ a natural transformation, $\Psi: \ModXcn \rarr \cS$ defined by $\Psi(A,M, \eta) = \bfX(A \oplus M) \rarr \bfX(A) \times_{\bfY(A)} \bfY(A \oplus M)$, let $F = \text{fib}_{\eta}(\Psi)$, for $\eta \in \bfX(A)$. According to Proposition 17.2.3.2 of \cite{SAG}, we have a fully faithful embedding $\iota: \QCoh(\bfX)^{\text{acn}} \hrarr \Fun(\ModXcn, \cS)^{\op}$. Then morally, the \textbf{cotangent complex} \footnote{Def. 17.2.4.2 \cite{SAG}} of $\alpha$ is defined by $\LXY = \iota^{-1}F$. If $\bfY = *$ is the final object of $\FunCAlgcnS$, then we say $\bfX$ admits a cotangent complex $\LX = \text{L}_{\bfX/*}$.\\

One thing we need to clearly define however, that's the notion of quasi-coherent sheaf on stacks and functors. We start with functors, following Construction 6.2.1.7 of \cite{SAG}. Let $q: \cD \rarr \cE$ be a cartesian fibration in $\SetD$. A functor $F: \cC \rarr \cD$ over $\cE$ is said to be $q$-cartesian \footnote{Def. 6.2.1.1. \cite{SAG}} if it maps $\cC_1$ into $q$-cartesian edges of $\cD$. Let $\FuncartECD$ be the full subcategory of $\FunECD$ spanned by such maps. Now let $e:\cC \rarr \cE $ be a R fibration, object of $\SetDRE$, the simplicial category of right fibrations over $\cE$. We have a functor:
\begin{align}
	\text{Fun}: (\SetDRE)^{\op} &\rarr \CatDinf \nonumber \\
	\cC &\mapsto \FuncartECD \nonumber
\end{align}
Here $\CatDinf$ denotes the simplicial category of small $\infty$-categories, with morphisms spaces between $\cC, \cD \in \Catinf$ the largest Kan complex in $\Fun(\cC, \cD)$, hence it is a simplicial category. We have $\text{N}(\CatDinf) = \Catinf$. But we also have $\text{N}(\SetDRE) \simeq \FunEopS$. Thus by taking the simplicial nerve of the functor Fun, we obtain a functor:
\beq
\Phi[q]: \FunEopS^{\op} \rarr \Catinf \nonumber
\eeq
In practice, for $\bfX: \Eop \rarr \cS$ a functor, classifying a R fibration $\cC \rarr \cE$, $\Phi[q](\bfX) = \FuncartECD$ \footnote{Rmrk. 6.2.1.8 \cite{SAG}}. Dually, if $q: \cD \rarr \cE$ is a coCartesian fibration, considering left fibrations over $\cE$ and doing the same construction, we get a functor $\Phi'[q] : \FunEhatS ^{\op} \rarr \widehat{\Catinf}$. We now apply this formalism to:
\beq
q = \pi_1: \cD = \CAlgcn \times_{\CAlg} \Mod \rarr \CAlgcn = \cE \nonumber
\eeq
a coCartesian fibration. Let $\bfX: \CAlgcn \rarr \hat{\cS}$ be a functor, classifying a L fibration $\cC \rarr \CAlgcn$. Then we have:
\begin{align}
	\bQCoh \equiv \Phi'[q]: \FunCAlgcnhatS ^{\op} &\rarr \widehat{\Catinf} \nonumber \\
	\bfX & \mapsto \Fun^{\text{cocart}}_{\CAlg}(\cC, \CAlgcn \times_{\CAlg} \Mod) \nonumber
\end{align}
which defines the \textbf{$\infty$-category of quasi-coherent sheaves on $\bfX$}. In practice though, $\cC$ is denoted $\overline{\CAlgcn}$ and $q: \overline{\CAlgcn} \times_{\CAlg} \Mod \rarr \overline{\CAlgcn} \rarr \CAlgcn$, since we consider functors over $\CAlgcn$. We recognize the fiber product as $\ModX$. Thus we have $\bQCoh(\bfX) = \Fun^{\text{cocart}}_{\CAlg}(\overline{\CAlgcn}, \ModX)$.\\ 

Quasi-coherent sheaves on spectral DM stacks are defined differently. Recall $\Mod$ is the $\infty$-category whose objects are pairs $(A,M)$, $A \in \CAlg$, $M \in \ModA$. We can generalize this as follows: we can consider triples $(\cX, \cO, \cF)$, where $\cX$ is an $\infty$-topos, $\cO$ is a sheaf of $\Einf$-rings on $\cX$, and $\cF$ is a sheaf of $\cO$-module spectra on $\cX$. Let $\infTopMod$ be the $\infty$-category whose objects are such triples, with morphism triples, consisting of a geometric morphism between $\infty$-topoi, and the two corresponding induced maps on sheaves. We can also speak of $(\cO, \cF)$ as being a $\Mod$-valued sheaf on $\cX$. Let $\infTopsHenMod$ be the subcategory of $\infTopMod$ whose objects are such that $\cO$ is strictly Henselian. The global sections functor $\Gamma: \infTopsHenMod \rarr \Mod^{\op}$ has a right adjoint, that we denote by $\Spet_{\Mod}$ \footnote{Cor. 2.2.1.5. \cite{SAG}}.\\

Now for $X = (\cX, \cOX)$ a (non-connective) spectral DM stack, if $\cF$ is a sheaf of $\cOX$-modules on $\cX$, we can regard $(\cX, \cOX, \cF)$ as an object of $\infTopMod$. We say $\cF$ is \textbf{quasi-coherent} \footnote{Def. 2.2.2.1. \cite{SAG}} if there exists a collection $\Ua$ of objects of $\cX$ that cover it, such that $\forall \, \alpha$, $ \exists \,  \Aa \in \CAlg$, $M_{\alpha} \in \Mod_{\Aa}$ with equivalences:
\beq
(\cX/\Ua, \cO|_{\Ua}, \cF|_{\Ua}) \simeq \Spet_{\Mod}(\Aa, M_{\alpha}) \nonumber
\eeq
We denote by $\QCoh(X)$ the full subcategory of $\Mod_{\cOX}$ spanned by such objects $\cF$. \\

\newpage

Finally, for $f: \bfX \Lrarr \bfY$ a natural transformation, for $\bfX, \bfY: \CAlgcn \rarr \cS$, we say $f$ is \textbf{locally almost of finite presentation} \footnote{Def. 17.4.1.1 \cite{SAG}} if for $n \geq 0$, for a filtered diagram $\{\Aa\}$ in $(\CAlgcn)_{\leq n}$ - corresponding to $n$-truncated objects of $\CAlgcn$ - with a colimit $A$, we have a homotopy equivalence $\colim \bfX(\Aa) \rarr \bfX(A) \times_{ \bfY(A)} \colim \bfY(\Aa)$.\\

\section{Representation theory of formal moduli problems}
In this section we introduce formal moduli problems, and one of the main results from \cite{SAG} which we will use: Theorem 13.4.0.1, which states that we have a fully faithful embedding $\bQCohart(\bfX) \hrarr \Repgs$, for $\ggs$ a dg Lie algebra over a field $k$ of characteristic zero, $\bfX = \Psi(\ggs)$ the formal moduli problem associated to $\ggs$.\\

We first define formal moduli problems, then quasi-coherent sheaves defined on them, and finally the morphism $\Psi$ in the above statement.\\

\subsection{Formal moduli problems}
Let $k$ be a field of characteristic zero, $\bfX: \CAlgartk \rarr \cS$ a functor. It is said to be a \textbf{formal moduli problem} \footnote{Ch. 13 \cite{SAG}} if $\bfX(k)$ is contractible, and for objects $R_1, R_2, R_0$ of $\CAlgartk$ such that $\pi_0 R_1 \surj \pi_0 R_0$ and $\pi_0 R_2 \surj \pi_0 R_0$, $\bfX$ maps any pullback:
\beq
\xymatrix{
	R_{12} \ar[d] \ar[r] & R_2 \ar[d] \\
	R_1 \ar[r] & R_0
} \nonumber
\eeq
to a pullback:
\beq
\xymatrix{
	\bfX(R_{12}) \ar[d] \ar[r] & \bfX(R_2) \ar[d] \\
	\bfX(R_1) \ar[r] & \bfX(R_0)
} \nonumber
\eeq
We denote by $\Modulik$ the full subcategory of $\FunCAlgartkS$ spanned by such functors.\\

\subsection{Quasi-coherent sheaves on formal moduli problems}
Quasi-coherent sheaves on formal moduli problems are valued in commutative algebra objects of $\widehat{\Catinf}$, and this can be seen from the following construction.\\

Consider the coCartesian fibration $q:\Mod(\Modk)^{\otimes} \rarr \CAlgk \times \Fins$. There is a map $\xi: \CAlgk \rarr \CAlg(\hCatinf)$ that classifies $q$. Its restriction to artinian algebras admits an essentially unique factorization:
\beq
\CAlgartk \rarr \FunCAlgartkS \rarr \CAlg(\hCatinf) \nonumber
\eeq
where the second map preserves small limits, and is denoted by $\bQCohart$. For $\bfX \in \Modulik$, $\bQCohart(\bfX)$ is called the \textbf{$\infty$-category of quasi-coherent sheaves on $\bfX$} \footnote{Constr. 13.4.6.1 \cite{SAG}}. In this definition, if $\cC$ is a symmetric monoidal $\infty$-category, $\Mod(\Modk)^{\otimes}$ has objects of the form $(A,M_1, \cdots, M_n)$, where $A \in \CAlg(\cC)$, and $M_i \in \ModA$, $1 \leq i \leq n$. $\Fins$ is Segal's category of finite pointed sets with objects of the form $\langle n \rangle$, $n \geq 0$, and morphisms are maps $ \langle m \rangle  \rarr \langle n \rangle $ that preserve the fixed point $*$. \\

Now we can derive a formula for $\bQCohart(\bfX)$ following the method we used to find one for $\bQCoh(\bfX)$, $\bfX: \CAlgcn \rarr \cS$. Consider the projection $q: \overline{\CAlgartk} \times_{\CAlg} \Mod(\Modk) \rarr \overline{\CAlgartk}$. To understand that notation, note that $\Mod(\Modk) = (\Mod(\Modk)^{\otimes})_{<1>}$, it follows $\Mod(\Modk) = \{ (A,M) \in \CAlg(\Modk) \times_{\CAlg} \Mod \}$. Then observe $\CAlg(\Modk) = \CAlg(\Modk(\Sp)) = \CAlgk$ \footnote{Var. 7.1.3.8 \cite{SAG}}, hence $\Mod(\Modk) = \CAlgk \times_{\CAlg} \Mod$, so that we can write $\overline{\CAlgartk} \times_{\CAlg} \Mod(\Modk) = \overline{\CAlgartk}\times_{\CAlg} \Mod$, which we denote by $\ModXart$. Thus $q: \ModXart \rarr \overline{\CAlgartk}$. Let $\bfX: \CAlgartk \rarr \cS$ be a formal moduli problem, classifiying a L fibration $\overline{\CAlgartk} \rarr \CAlgartk$. Then one can show $\bQCohart(\bfX) = \FuncocartCAlgartk(\overline{\CAlgartk}, \ModXart)$.\\

Finally we define the morphism $\Psi$ used in the statement of Theorem 13.4.0.1 of \cite{SAG}. This morphism is actually introduced in Theorem 13.0.0.2 of the same reference. $\Psi: \Liek \rarr \Modulik$ is an equivalence of $\infty$-categories. We have $\Psi(\ggs) = \Map_{\Liek}(\cD(-), \ggs)$, where $\cD: \CAlgaugkop \rarr \Liek$ is the Koszul duality functor, right adjoint to the cohomological Chevalley-Eilenberg complex functor $C^*: \Liek \rarr \CAlgaugkop$. The interested reader will find ample details in Chapter 13 of \cite{SAG}. \\

\section{Dual representation} 
By dual representation, we mean representing a geometric object in two ways. This will involve the $\infty$-category of quasi-coherent sheaves on spectral DM stacks, on objects of $\FunCAlgcnS$, and on formal moduli problems. In a first time, we give the relationship between quasi-coherent sheaves on functors: $\CAlgcn \rarr \cS$, and on formal moduli problems. Then we discuss our first dual representation result at the level of spectral DM stacks, and then at the level of functors $\CAlgcn \rarr \cS$.

\subsection{Relations between quasi-coherent sheaves}
For $\bfX \in \FunCAlgcnS$, let $\isX$ be the pullback of $\bfX$ along the fully faithful embedding $\iota: \CAlgartk \hrarr \CAlgcn$. We say $\bfX: \CAlgcn \rarr \cS$ is \textbf{artinian} if $\isX \in \Modulik$. On the one hand we have $\bQCoh(\bfX) = \FuncocartCAlgcn(\overline{\CAlgcn}, \ModX)$, and on the other we have: 
\beq
\bQCohart(\isX) = \FuncocartCAlgartk(\overline{\CAlgartk}, \ModisXart) \nonumber
\eeq
Consider the following CD:
\beq
\xymatrix{
	\CAlgartk \times_{\cS} \cZ \ar@{=}[d] & \CAlgcn \times_{\cS} \cZ \ar@{=}[d] \nonumber \\
	\overline{\CAlgartk} \ar[d] \ar@{.>}[r]^{\iota \times id} & \overline{\CAlgcn} \ar[d] \ar[r] & \cZ \ar[d] \\
	\CAlgartk \ar@/_2pc/[rr]_{\isX} \ar[r]_{\iota} & \CAlgcn \ar[r]_{\bfX} & \cS
}
\eeq
where $\cZ \rarr \cS$ is the universal left fibration (see \cite{HTT}). The outside square and the rightmost square are Cartesian, hence so is the leftmost square. This means:
\beq
\overline{\CAlgartk} = \CAlgartk \times_{\CAlgcn} \overline{\CAlgcn} \nonumber
\eeq
it follows:
\begin{align}
	\ModisXart &= \overline{\CAlgartk} \times_{\CAlgcn} \Mod \nonumber \\
	&=\big( \CAlgartk \times_{\CAlgcn} \overline{\CAlgcn} \big) \times_{\CAlgcn} \Mod \nonumber \\
	&\simeq \CAlgartk \times_{\CAlgcn} \big( \overline{\CAlgcn} \times_{\CAlgcn} \Mod \big) \nonumber \\
	&= \CAlgartk \times_{\CAlgcn} \ModX \nonumber
\end{align}
Consider the following CD:
\beq
\xymatrix{
	\overline{\CAlgartk} \ar@{=}[d] & \ModisXart \ar@{=}[d] \nonumber \\
	\CAlgartk \times_{\CAlgcn} \overline{\CAlgcn} \ar[d]^{\pi_2} \ar@{.>}[r] & \CAlgartk \times_{\CAlgcn} \ModX \ar[d]_{\pi_2} \ar[r] &\CAlgartk \ar[d]_{\iota} \nonumber\\
	\overline{\CAlgcn} \ar[r] & \ModX \ar[r] & \CAlgcn
}
\eeq

from which we see that $\Fun(\overline{\CAlgartk}, \ModisXart)$ is functorially obtained from $\Fun(\overline{\CAlgcn}, \ModX)$ by base change, which also preserves coCartesian maps. Indeed, recall that if $q: \ModX \rarr \CAlgcn$, a functor $F:\overline{\CAlgcn} \rarr \ModX$ is $q$-coCartesian if it maps every edge of $\overline{\CAlgcn}$ to a $q$-coCartesian edge of $\ModX$. Some of those same edges of $\overline{\CAlgcn}$ are also constituents of edges of $\overline{\CAlgartk} = \CAlgartk \times_{\CAlgcn} \overline{\CAlgcn}$, that map under $\CAlgartk \times_{\CAlgcn} F$ to $\CAlgartk \times_{\CAlgcn} q$-cocartesian edges of $\CAlgartk \times_{\CAlgcn} \ModX = \ModisXart$. Thus we have shown that $\bQCohart(\isX)= \FuncocartCAlgartk(\overline{\CAlgartk}, \ModisXart)$ is functorially obtained from $\bQCoh(\bfX)= \FuncocartCAlgcn(\overline{\CAlgcn}, \ModX)$ by base change. \\

There are a few technical, peripheral results pertaining to quasi-coherent sheaves we go over presently. The first result is the following:
\beq
\big( \bQCoh(\bfX)|_{\CAlgartk} \big)^{\tcn} \supseteq \bQCoh(\bfX)^{\tcn}|_{\CAlgartk} \nonumber 
\eeq
where we have replaced, for notation's sake, the functorial fiber product with $\CAlgartk$ in $\Fun(\overline{\CAlgcn},\ModX)$ with a restriction. Recall from Rmrk 6.2.2.7 of \cite{SAG} that for a functor $\bfX: \CAlgcn \rarr \cS$, for $A \in \CAlgcn$, $\eta \in \bfX(A)$, both of which are encapsulated in a lift $\tilde{A} \in \overline{\CAlgcn}$, where $\overline{\CAlgcn} \rarr \CAlgcn$ is a L fibration classified by $\bfX$, then $\cF \in \bQCoh(\bfX)$ can be seen as a map that to $(A,\eta)$ associates $\cF(\tilde{A}) \in \Mod_A$, and we will use the notation of \cite{SAG}: $\cF(\tilde{A}) = (A,\cF(\eta))$. Now by Def. 6.2.5.3 of \cite{SAG}, if $P$ is a characteristic of pairs $(A,M)$, $A \in \CAlgcn$, $M \in \ModA$, stable under base change, $\cF \in \bQCoh(\bfX)$ is said to have the property $P$ if $\forall \, A \in \CAlgcn$, $\forall \, \eta \in \bfX(A)$, $(A, \cF(\eta))$ has the property $P$. By Prop. 6.2.5.2. of \cite{SAG}, if $P = cn$ is the property of being connective, defined by asking that in a pair $(A,M)$, $M$ is connective seen as a spectrum object, then $cn$ is stable under base change, which allows us to speak of connective quasi-coherent sheaves. To say that $\cF \in \bQCoh(\bfX)$ is \textbf{connective} \index{connective} then means that for all $A \in \CAlgcn$, $\cF(\eta) \in \ModA$ is connective, which is true in particular if $A \in \CAlgartk$, that is $\bQCoh(\bfX)^{\tcn}|_{\CAlgartk} \subseteq \bQCohart(\isX)^{\tcn}$. Now one may very well conceive that $\cF \in \bQCohart(\isX)^{\tcn}$ comes from some object of $\bQCoh(\bfX)$ that was not necessarily connective on non-artinian objects of $\CAlgcn$, thus just have an inclusion. By denoting $\bQCohart(\isX)$ by $\bQCoh(X)|_{\CAlgartk}$, we have the desired result.\\

We will see later that the affine perception of a spectral DM stack $X$ is related to its manifestation by $\AffXop \simeq \CAlg(\bQCoh(\bfX)^{\tcn})$. If we want to connect this with the representation theory aspect of the problem, one may want to relate $\CAlg(\bQCoh(\bfX)^{\tcn})$ with $\CAlg(\bQCohart(\isX)^{\tcn})$. From the work just done on connective sheaves, it is clear that it suffices to establish a relation between $\CAlg(\bQCoh(\bfX))$ and $\CAlg(\bQCohart(\isX))$, if any.\\

Recall from Definition 2.1.2.7 of \cite{HA} that for $\cC$ an $\infty$-operad, $\CAlg(\cC) \subseteq \Fun(\tN(\Fins), \cC)$ is spanned by the $\infty$-operad maps i.e. those maps for which the CD below is commutative: 
\beq
\xymatrix{
	\tN(\Fins) \ar@{=}[dr] \ar[rr] && \cC \ar[dl]\\
	&\tN(\Fins)
} \nonumber
\eeq
and preserves inert morphisms, where by \textbf{inert morphism} \footnote{Def. 2.1.2.3 \cite{HA}} in $\Ctensor$, we mean if $p: \Ctensor \rarr \tN(\Fins)$ is an $\infty$-operad, a morphism $f$ in $\Ctensor$ is inert if $p(f)$ is inert and $f$ is $p$-coCartesian. By Definition 2.1.1.8 of \cite{HA}, $\psi: \langle m \rangle \rarr \langle n \rangle$ a morphism in $\Fins$ is inert if for all $i \in \langle n \rangle -\{*\}$, $\psi^{-1}(i)$ is a singleton. We will split the definition of being inert for a morphism $f$ in $\Ctensor$ into $f$ being \text{$p$-inert}, that is $p(f)$ is inert, and $f$ being $p$-coCartesian.\\

Since we are interested in $\CAlg(\bQCohart(\isX))$ in particular, by the above definitions we should understand inert morphisms in $\bQCohart(\isX)^{\oT}$, so we consider the $\infty$-operad $p: \bQCohart(\isX)^{\oT} \rarr \tN(\Fins)$. Observe that a $p$-inert edge of $p:\bQCoh(\bfX)^{\oT} \rarr N(\Fins)$ (using the same notation $p$ for simplicity) will produce a $p$-inert edge of $\bQCohart(\isX)^{\oT}$ once restricted to artinian algebras, but precisely because of that restriction, there can be edges of $\bQCoh(\bfX)^{\oT}$ that become $p$-inert after restriction, so we have the following inclusion:
\beq
\text{$p$-inert}(\bQCoh(\bfX)|_{\CAlgartk}) \supseteq \text{$p$-inert}(\bQCoh(\bfX))|_{\CAlgartk} \label{pinert}
\eeq
Regarding $p$-coCartesian maps, suppose $f: \cF \rarr \cG$ is $p$-coCartesian in $\bQCoh(\bfX)$. This means $\forall \, \cH \in \bQCoh(\bfX)$, we have a homotopy pullback square:
\beq
\xymatrix{
	\Map_{\bQCoh(\bfX)}(\cG, \cH) \ar[d] \ar[r] &\Map_{\bQCoh(\bfX)}(\cF, \cH) \ar[d] \\
	\Map_{\tN(\Fins)}(p\cG, p\cH) \ar[r] & \Map_{\tN(\Fins)}(p\cF, p\cH)
} \nonumber
\eeq
call the bottom left mapping space $\cC$, the top right one $\cE$, and the bottom right one $\cD$. Then we have $\Map_{\bQCoh(\bfX)}(\cG, \cH) \simeq \cC \times_{\cD} \cE$. Since $\bQCohart(\isX) = \CAlgartk \times_{\CAlgcn} \bQCoh(\bfX)$, if $\cF \in \bQCoh(\bfX)$, we have $\CAlgartk \times_{\CAlgcn} \cF \in \bQCohart(\isX)$. We will denote such restrictions by the same letter $\cF$ for simplicity's sake. Note that we have:
\begin{align}
	\Map_{\bQCoh(\bfX)|_{\CAlgartk}}(\cG, \cH) &= \CAlgartk \times_{\CAlgcn} \Map_{\bQCoh(\bfX)}(\cG, \cH) \label{one} \\
	&\simeq \CAlgartk \times_{\CAlgcn} (\cC \times_{\cD} \cE) \label{two} \\
	&\simeq \cC \times_{\cD} \big( \CAlgartk \times_{\CAlgcn} \cE \big) \label{resfctrCD}\\
	&= \cC \times_{\cD} \Map_{\bQCoh(\bfX)|_{\CAlgartk}}(\cF, \cH) \nonumber
\end{align}
where $\cC$ and $\cD$ in \eqref{resfctrCD} are using restricted functors $p\cF$, $p\cG$ and $p\cH$. Having the above equivalence means precisely that the following diagram is a homotopy pullback square:
\beq
\xymatrix{
	\Map_{\bQCoh(\bfX)|_{\CAlgartk}}(\cG, \cH) \ar[d] \ar[r] &\Map_{\bQCoh(\bfX)|_{\CAlgartk}}(\cF, \cH) \ar[d] \\
	\Map_{\tN(\Fins)}(p\cG, p\cH) \ar[r] & \Map_{\tN(\Fins)}(p\cF, p\cH)
} \nonumber
\eeq
hence $f$ is $p$-coCartesian viewed as a morphism of $\bQCohart(\isX)$. Thus, the restriction to $\CAlgartk$ of a $p$-coCartesian edge of $\bQCoh(\bfX)$ is again a $p$-coCartesian edge of $\bQCoh(\bfX)|_{\CAlgartk}$, but by \eqref{one} and \eqref{two} one can very well have a $p$-coCartesian edge of $\bQCoh(\bfX)|_{\CAlgartk}$ being the restriction to artinian objects of a map in $\bQCoh(\bfX)$ that is not necessarily $p$-coCartesian, thus we have (with obvious notations):
\beq
\text{$p$-coCart}(\bQCoh(\bfX)|_{\CAlgartk}) \supseteq \text{$p$-coCart}(\bQCoh(\bfX)|_{\CAlgartk} \label{pcocart}
\eeq
It follows from \eqref{pinert} and \eqref{pcocart} that we have:
\beq
\CAlg(\bQCohart(\isX)) \supseteq \CAlg(\bQCoh(\bfX))|_{\CAlgartk} \nonumber
\eeq
by formally the same reasoning, we conclude:
\beq
\CAlg(\bQCohart(\isX)^{\tcn}) \supseteq \CAlg(\bQCoh(\bfX)^{\tcn})|_{\CAlgartk} \nonumber
\eeq
Observe that this could also be obtained from the inclusion $\bQCohart(\isX)^{\tcn} \supseteq \bQCoh(\bfX)^{\tcn}|_{\CAlgartk}$, and the fact that $\CAlg(\cC|_{\CAlgartk}) \supseteq \CAlg(\cC)|_{\CAlgartk}$ by following the arguments above.

\subsection{Dual representation for spectral DM stacks}
For $X$ a spectral DM stack, denote by $\AffX$ the full subcategory of $\SpDM$ spanned by affine morphisms $Y \rarr X$, where one says such a map is \textbf{affine} if for any map $\Spet R \rarr X$, $\Spet R\times_X Y$ is affine. We call $\AffX$ the \textbf{affine perception of $X$}. We have the following theorem:

\begin{DualitySpDM}
	Let $X \in \SpDM$, $\bfX$ the functor it represents, which we suppose is artinian. Then we have a dual representation of the manifestation $\bQCoh(\bfX)$ of $\bfX$:
\beq
\xymatrix{
	&\bQCoh(\bfX) \ar[dl]_{\CAlg(-)^{\tcn}} \ar[dr]^{\quad \CAlgartk \times_{\CAlgcn}(-)} \\
	\AffXop && \bQCohart(\isX) \hrarr \Rep_{\Psi^{-1}(\isX)}
} \nonumber
\eeq
\end{DualitySpDM}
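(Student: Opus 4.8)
The plan is to assemble the two legs of the diagram from results already in hand, since the theorem is a collecting statement rather than a new computation; the two arrows out of $\bQCoh(\bfX)$ have essentially been prepared in the introduction and in subsection 7.1, and what remains is to confirm the hypotheses of the cited theorems and to check compatibility of the relevant equivalences with the algebraic structures involved.

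First I would treat the left leg. Because $X \in \SpDM$ and $\bfX = h_X = \Hom(\Spet(-), X)$ under the fully faithful embedding $h: \SpDM \hrarr \Fun(\CAlgcn, \cS)$, Proposition 6.2.4.1 of \cite{SAG} supplies an equivalence $\bQCoh(\bfX) = \QCoh(h_X) \simeq \QCoh(X)$. I would then note that this equivalence is symmetric monoidal and carries connective objects to connective objects, so that it induces an equivalence $\CAlg(\bQCoh(\bfX)^{\tcn}) \simeq \CAlg(\QCoh(X)^{\tcn})$. Composing with Proposition 2.5.1.2 of \cite{SAG}, which gives $\AffXop \simeq \CAlg(\QCoh(X)^{\tcn})$, produces the left arrow $\CAlg(\bQCoh(\bfX)^{\tcn}) \simeq \AffXop$ exactly as recorded in the introduction.

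Next I would treat the right leg. The identity $\bQCohart(\isX) = \CAlgartk \times_{\CAlgcn} \bQCoh(\bfX)$ was already established in subsection 7.1 through the base-change computation of $\ModisXart$ together with the observation that the base change preserves coCartesian morphisms; this is precisely the right arrow. Since $\bfX$ is artinian by hypothesis, $\isX \in \Modulik$ is a formal moduli problem, so that $\Psi^{-1}(\isX) = \ggs$ is a well-defined dg Lie algebra over $k$, with $\Psi: \Liek \rarr \Modulik$ the equivalence of Theorem 13.0.0.2 of \cite{SAG}. Theorem 13.4.0.1 of \cite{SAG} then furnishes the fully faithful monoidal embedding $\bQCohart(\isX) \hrarr \Rep_{\Psi^{-1}(\isX)}$, completing the leg.

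The hard part will be the monoidal and connectivity compatibility on the left leg: to pass the equivalence $\QCoh(h_X) \simeq \QCoh(X)$ through the formation $\CAlg(-)^{\tcn}$ one must know it respects both the symmetric monoidal structure (so commutative algebra objects are preserved) and connectivity (so it commutes with the truncation to the connective part), and I would extract exactly these two properties from the construction underlying Proposition 6.2.4.1. Beyond that the argument is bookkeeping, namely verifying that $\bfX = h_X$ so that Propositions 6.2.4.1 and 2.5.1.2 apply verbatim, and that the artinian hypothesis on $\bfX$ is precisely what licenses the invocation of Theorem 13.4.0.1 along the left Kan extension; the remainder follows by transport of the established base-change identity and direct citation.
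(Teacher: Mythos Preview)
Your proposal is correct and follows essentially the same route as the paper's proof: both legs are assembled from Propositions 2.5.1.2 and 6.2.4.1 of \cite{SAG} on the left and from the base-change identity of subsection 7.1 together with Theorem 13.4.0.1 of \cite{SAG} on the right. The only difference is that you explicitly flag the symmetric monoidal and connectivity compatibility needed to pass the equivalence $\QCoh(h_X)\simeq\QCoh(X)$ through $\CAlg((-)^{\tcn})$, whereas the paper treats this as implicit.
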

\begin{proof}
	By Proposition 2.5.1.2 of \cite{SAG}, we have an equivalence of $\infty$-categories $\AffXop \simeq \CAlg(\QCoh(X)^{\tcn})$. From Proposition 6.2.4.1 of \cite{SAG}, for $X$ a spectral DM stack, we have an equivalence of $\infty$-categories $\QCoh(X) \simeq \bQCoh(\bfX)$, with $\bfX = \Hom(\Spet(-), X)$. Thus $\CAlg(\bQCoh(\bfX)^{\tcn}) \simeq \AffXop$. On the other hand, since $\bfX$ is artinian, $\isX$ is a formal moduli problem, and Theorem 13.4.0.1 of \cite{SAG} provides a fully faithful monoidal embedding $\bQCohart(\isX) \hrarr \Repgs$, where $\ggs = \Psi^{-1}(\isX)$, $\Psi^{-1}$ homotopy inverse to $\Psi$. Further $\bQCohart(\isX)$ is obtained from $\bQCoh(\bfX)$ by base change, which we can formally write as:
	\beq
	\bQCohart(\isX) = \CAlgartk \times_{\CAlgcn}\bQCoh(\bfX) \nonumber
	\eeq
\end{proof}

\subsection{Dual representation for geometric stacks}
In the previous subsection, we introduced the affine perception of a spectral DM stack, the $\infty$-category of all affine maps $Y \rarr X$. If $Y = \Spet R$, we would have a notion of local perception. Observe that if a functor $\bfX: \CAlgcn \rarr \cS$ is represented by $X$, $\bfX = \Hom(\Spet (-), X)$, then $\bfX$ itself provides the \textbf{local perception of $X$}. In this subsection we are interested in recovering $\bfX$ from the quasi-coherent sheaves defined on it. This is possible of $\bfX$, if in addition to being artinian, it is also a \textbf{geometric stack} \footnote{Def. 9.3.0.1 \cite{SAG}}, that is it satisfies descent for the fpqc topology, its diagonal map is affine, and there is a faithfully flat map $\Spec R \rarr \bfX$ for some $R \in \CAlgcn$.

\begin{DualityFctr}
	Let $X \in \SpDM$, representing an artinian, geometric stack $\bfX$. Then we have a dual representation of the manifestation $\bQCoh(\bfX)$ of $\bfX$:
\beq
\xymatrix{
		&\bQCoh(\bfX) \ar[dl]_{\text{Tannaka  Duality}} \ar[dr]^{\quad \CAlgartk \times_{\CAlgcn}(-)} \\
		\bfX = \Hom(\Spet(-),X) && \bQCohart(\isX) \hrarr \Rep_{\Psi^{-1}(\isX)}
		} \nonumber
\eeq
\end{DualityFctr}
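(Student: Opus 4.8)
The plan is to treat this as the geometric-stack refinement of the preceding Theorem (Dual representation for spectral DM stacks), reusing the right-hand branch verbatim and replacing only the left-hand branch. The right branch — base change together with the embedding into representations — makes no use of the affine perception and so transfers unchanged; only the left branch, which previously produced the affine perception $\AffXop$ via $\CAlg(-)^{\tcn}$, must now be upgraded to the recovery of the full functor $\bfX$ afforded by the stronger geometric-stack hypothesis.

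First I would dispose of the right branch, which is identical to the previous proof. By hypothesis $\bfX$ is artinian, so by the definition given in Subsection 7.1 its left Kan extension $\isX$ along $\iota: \CAlgartk \hrarr \CAlgcn$ lies in $\Modulik$, i.e. it is a formal moduli problem. Theorem 13.4.0.1 of \cite{SAG} then supplies the fully faithful monoidal embedding $\bQCohart(\isX) \hrarr \Rep_{\Psi^{-1}(\isX)}$, with $\Psi^{-1}$ the homotopy inverse to the equivalence $\Psi$ of Theorem 13.0.0.2. The identification of this source with a base change of the manifestation is precisely the computation carried out in Subsection 7.1, namely $\bQCohart(\isX) = \CAlgartk \times_{\CAlgcn} \bQCoh(\bfX)$, which follows from the Cartesian square $\overline{\CAlgartk} = \CAlgartk \times_{\CAlgcn} \overline{\CAlgcn}$ and the fact that this base change preserves coCartesian maps. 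This yields the right-hand arrow of the diagram.

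For the left branch I would first align the two notions of quasi-coherent sheaves. Since $\bfX = \Hom(\Spet(-), X)$ is represented by $X \in \SpDM$, Proposition 6.2.4.1 of \cite{SAG} gives an equivalence $\QCoh(X) \simeq \bQCoh(\bfX)$, so the manifestation carries the symmetric monoidal structure on which Tannaka duality is phrased. I would then invoke the geometric-stack hypothesis: $\bfX$ satisfies fpqc descent, has affine diagonal, and admits a faithfully flat atlas $\Spec R \rarr \bfX$. These are exactly the hypotheses of Theorem 9.3.0.3 of \cite{SAG}, Tannaka duality for geometric stacks, which asserts that $\bfX$ is functorially recovered from $\bQCoh(\bfX)$ — concretely, that for any $\bfY$ the mapping space $\Map(\bfY, \bfX)$ is computed by the appropriate $\infty$-category of symmetric monoidal, colimit-preserving functors $\bQCoh(\bfX) \rarr \bQCoh(\bfY)$ that are right $t$-exact and preserve flat objects. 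This produces the left-hand arrow and assembles the diagram.

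The main obstacle I anticipate is not the invocation of Tannaka duality itself but the bookkeeping that makes it legitimate: one must check that the manifestation $\bQCoh(\bfX)$, defined through the functor-of-points construction $\Phi'[q]$ of Section 5, is identified — as a symmetric monoidal $\infty$-category, not merely as an underlying $\infty$-category — with the $\QCoh(X)$ appearing in the statement of Theorem 9.3.0.3, and that the three geometric-stack conditions imposed on the functor $\bfX$ correspond to those required of the stack under the fully faithful embedding $h: \SpDM \rarr \Fun(\CAlgcn, \cS)$. Once this compatibility is in place the recovery of $\bfX$ is immediate, and the two branches combine into the asserted dual representation of $\bQCoh(\bfX)$.
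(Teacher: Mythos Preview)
Your proposal is correct and follows essentially the same route as the paper: invoke Theorem 9.3.0.3 of \cite{SAG} (Tannaka duality for geometric stacks) for the left branch, and reuse the artinian hypothesis together with Theorem 13.4.0.1 and the base-change computation of Subsection 7.1 for the right branch. The paper's own proof is terser and does not explicitly pass through Proposition 6.2.4.1 or flag the symmetric-monoidal compatibility issue you raise, but the underlying argument is the same.
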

\begin{proof}
From Theorem 9.3.0.3 of \cite{SAG}, $\bfX$ a geometric stack can be functorially recovered from $\bQCoh(\bfX)$ by Tannaka duality. The restriction of the latter to artinian algebras is $\bQCohart(\isX)$, full subcategory of $\Repgs$, where $\ggs = \Psi^{-1} (\isX)$, is the $\infty$-category of representations of the dg Lie algebra $\ggs$ associated to $\isX$.
\end{proof}

\subsection{Variants}
We can obtain stronger results, albeit with stricter hypotheses, by asking that $\bfX$ be representable, which necessitates the spectral Artin Representabiliy Theorem.
\begin{Var1}
	Let $\bfX: \CAlgcn \rarr \cS$ be a functor, that satisfies the hypotheses of the spectral Artin representability Theorem for some $n \geq 0$, hence is representable by a spectral DM $n$-stack $X$. Then supposing $\bfX$ is artinian, the manifestation $\bQCoh(\bfX)$ of $\bfX$ has a dual representation:
\beq
\xymatrix{
	&\bQCoh(\bfX) \ar[dl]_{\CAlg(-)^{\tcn}} \ar[dr]^{\quad \CAlgartk \times_{\CAlgcn} (-)} \\
	\AffXop && \bQCohart(\isX)
}\nonumber
\eeq
\end{Var1}

\newpage

\begin{Var2}
	Let $\bfX$ be an artinian, geometric stack that satisfies the hypotheses of the spectral Artin representability Theorem for some $n \geq 0$, let $X$ be the $n$-spectral DM stack representing it. Then the manifestation $\bQCoh(\bfX)$ of $\bfX$ has a dual interpretation:
\beq
\xymatrix{
		&\bQCoh(\bfX) \ar[dl]_{\text{Tannaka  Duality}} \ar[dr]^{\quad \CAlgartk \times_{\CAlgcn}(-)} \\
		\bfX = \Hom(\Spet(-),X) && \bQCohart(\isX) 
		}\nonumber
\eeq
\
\end{Var2}


\begin{thebibliography}{20}
	\bibitem[SHT]{SHT} P.G. Goerss, J.F. Jardine, \textit{Simplicial Homotopy Theory}, Modern Birkhauser Classics, 2009, Birkhauser Verlag AG.
	\bibitem[AG]{AG} R. Hartshorne, \textit{Algebraic Geometry}, Graduate Texts in Mathematics, Springer-Verlag (1977), New-York.
	\bibitem[SAG]{SAG} J. Lurie, \textit{Spectral Algebraic Geometry}, available at math.ias.edu/~lurie/
	\bibitem[HTT]{HTT} J. Lurie, \textit{Higher Topos Theory}, Annals of Mathematics Studies, 2009, Princeton University Press.
	\bibitem[HA]{HA} J. Lurie, \textit{Higher Algebra}, available at math.ias.edu/~lurie/.
	\bibitem[SGL]{SGL} S. Mac Lane, I. Moerdijk, \textit{Sheaves in Geometry and Logic}, Universitext, Springer-Verlag (1992), New York.
\end{thebibliography}
\end{document}